\newtheorem{thm}{Theorem}
\newtheorem{lemma}{Lemma}
\newtheorem{remark}{Remark}[section]
\newcommand{\R}{\ensuremath{\mathbb{R}}}
\newcommand{\N}{\ensuremath{\mathbb{N}}}
\newcommand{\F}{\ensuremath{\mathcal{F}}}
\newcommand{\wh}{\ensuremath{\widehat}}
\newcommand{\1}{\ensuremath{\mathbf{1}}}
\newcommand{\bx}{\ensuremath{\mathbf{x}}}
\newcommand{\eps}{\ensuremath{\varepsilon}}
\newcommand{\la}{\ensuremath{\lambda}}
\newcommand{\al}{\ensuremath{\alpha}}
\newcommand{\Ga}{\ensuremath{\Gamma}}
\newcommand{\de}{\ensuremath{\delta}}
\newcommand{\De}{\ensuremath{\Delta}}
\newcommand{\si}{\ensuremath{\sigma}}
\newcommand{\om}{\ensuremath{\omega}}
\newcommand{\tmu}{\ensuremath{\tilde{\mu}}}
\newcommand{\ls}{\ensuremath{\lesssim}}
\newcommand{\subs}{\ensuremath{\subseteq}}
\numberwithin{equation}{section}
\newcommand{\eq}{\begin{equation}
\newcommand{\ee}{\end{equation}}}
\title{Simplices in thin subsets of Euclidean spaces}
\author{Alex Iosevich and \'Akos Magyar }
\thanks{The research of the first listed author was partially supported by the National Science Foundation grant no. HDR TRIPODS - 1934962. The research of the second listed author was partially supported by the National Science Foundation grant NSF-DMS 1600840.}
\begin{document}

\begin{abstract} Let $\De$ be a non-degenerate simplex on $k$ vertices. We prove that there exists a threshold $s_k<k$ such that any set $A\subs \R^k$ of Hausdorff dimension $dim\,A\geq s_k$ necessarily contains a similar copy of the simplex $\De$.  
\end{abstract}

\maketitle

\section{Introduction.}

A classical problem of geometric Ramsey theory is to show that a sufficiently large sets contain a given geometric configuration. The underlying settings can be the Euclidean space, the integer lattice or vector spaces over finite fields. By a geometric configuration we understand the collection of finite point sets obtained from a given finite set $F\subs\R^k$ via translations, rotations and dilations.

\vskip.125in 

If the size is measured in terms of the positivity of the Lebesgue density, then it is known that large sets in $\R^k$ contain a translated and rotated copy of all sufficiently large dilates of any non-degenerate simplex $\De$ with $k$ vertices \cite{Bo3}. However, on the scale of the Hausdorff dimension $s<k$ this question is not very well understood, the only affirmative result in this direction obtained by Iosevich-Liu \cite{IL07}.

\vskip.125in 

In the other direction, a construction due to Keleti \cite{Keleti08} shows that there exists set $A\subs\R$ of full Hausdorff dimension which do not contain any non-trivial 3-term arithmetic progression. In two dimensions an example due to Falconer \cite{Falconer} and Maga \cite{Maga} shows that there exists set $A\subs \R^2$ of Hausdorff dimension 2, which do not contain the vertices of an equilateral triangle, or more generally a non-trivial similar copy of a given non-degenerate triangle. It seems plausible that examples of such sets exist in all dimensions, but this is not currently known. See (\cite{FP11}) for related results. 

\vskip.125in 

The purpose of this paper is to show that measurable sets $A\subs \R^k$ of sufficiently large Hausdorff dimension $s<k$ contain a similar copy of any given non-degenerate $k$-simplex with bounded eccentricity. Our arguments make use of and have some similarity to those of Lyall-Magyar \cite{LM18graph}. We also extend out results to bounded degree distance graphs. For the special case of a path (or chain), and, more generally, a tree, similar but somewhat stronger results were obtained in \cite{BIT16} and \cite{IT19}.

\section{Main results.}

Let $V=\{v_1,\dots,v_k\}\subs\R^k$ be a non-degenerate $k$-simplex, a set of $k$ vertices which are in \emph{general position} spanning a $k-1$-dimensional affine subspace. For $1\leq j\leq k$ let $r_j(V)$ be the distance of the vertex $v_j$ to the affine subspace spanned by the remaining vertices $v_i,\ i\neq j$ and define $r(V):=\min_{1\leq j\leq k} r_j(V)$. Let $d(V)$ denote the diameter of the simplex, which is also the maximum distance between two vertices. Then the quantity $\de(V):=r(V)/d(V)$, which is positive if and only if $V$ is non-degenerate, measures how close the simplex $V$ is to being degenerate.

We say that a simplex $V'$ is \emph{similar} to $V$, if $V'=x+\la\cdot U(V)$ for some $x\in\R^k$, $\la>0$ and $U\in SO(k)$, that is if $V'$ is obtained from $V$ by a translation, dilation and rotation.

\begin{thm}\label{Thm2.1} Let $k\in\N$, $\de>0$. There exists $s_0=s_0(k,\de) < k$ such that if $E$ is a compact subset of $\R^k$ of Hausdorff dimension $dim\,E \geq s_0$, then $E$ contains the vertices of a simplex $V'$ similar to $V$, for any non-degenerate $k$-simplex $V$ with $\de(V)\geq \de$.
\end{thm}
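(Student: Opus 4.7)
The strategy is to combine the Frostman-measure technique with a multilinear Fourier analysis over the complete graph $K_k$ describing $V$, adapting the framework of Lyall-Magyar \cite{LM18graph} from bounded-degree graphs to this denser setting. By Frostman's lemma, for any $s < \dim E$ there is a compactly supported probability measure $\mu$ on $E$ with $\mu(B(x,r)) \le Cr^s$, and it suffices to show that when $s$ is close enough to $k$, $\mathrm{supp}(\mu)$ contains a similar copy of $V$. Let $(t_{ij})_{i<j}$ be the normalized pairwise distances of $V$, let $\sigma_t^\epsilon$ be a smooth $\epsilon$-thickening of the surface measure on the sphere of radius $t$ in $\R^k$, and define
\[ \Lambda^\epsilon_\lambda(\mu) = \int \prod_{1\le i<j\le k} \sigma^\epsilon_{\lambda t_{ij}}(x_i-x_j)\, d\mu(x_1)\cdots d\mu(x_k), \qquad \bar\Lambda^\epsilon(\mu) = \int_1^2 \Lambda^\epsilon_\lambda(\mu)\, d\lambda. \]
A standard compactness argument then reduces the theorem to proving $\liminf_{\epsilon\to 0} \bar\Lambda^\epsilon(\mu) > 0$: positivity of the limit forces configurations of $k$ points in the support of $\mu$ lying at the prescribed pairwise distances up to scale.

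\textbf{Main term versus errors.} Decompose $\sigma^\epsilon_t = K^R_t + (\sigma^\epsilon_t - K^R_t)$, where $K^R_t$ is a smoothing of $\sigma_t$ at a coarser scale $R^{-1} \gg \epsilon$. Expanding the product over the $\binom{k}{2}$ edges of $K_k$ yields a main term, in which every factor is $K^R_t$, plus $2^{\binom{k}{2}}-1$ error terms, each containing at least one oscillatory factor $\sigma^\epsilon_t - K^R_t$. The main term is, up to controlled smoothing, the integral of a bump of width $R^{-1}$ over configurations similar to $\lambda V$; the non-degeneracy hypothesis $\delta(V) \ge \delta$ ensures transversality of the distance constraints, bounds the Jacobian of the pairwise-distance map from below, and thereby yields a uniform positive lower bound after averaging in $\lambda \in [1,2]$. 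Each error term is estimated via Plancherel, using the spherical Fourier decay $|\widehat{\sigma_t}(\xi)| \lesssim |\xi|^{-(k-1)/2}$ and the Frostman energy bound $\int |\hat\mu(\xi)|^2 |\xi|^{s-k}\, d\xi \lesssim I_s(\mu) < \infty$. Combining these gives decay of the form $R^{-\eta(k-s)}$ per error term (with $\eta = \eta(k,\delta) > 0$), so choosing $R$ large and $s$ sufficiently close to $k$ makes the total error negligible against the main term.

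\textbf{Main obstacle.} The principal difficulty is that $K_k$ carries $\binom{k}{2}$ edges rather than the $k-1$ of a spanning tree as in \cite{BIT16,IT19}, so $\Lambda^\epsilon_\lambda$ does not reduce to an iteration of spherical convolutions, and many quadratic distance constraints must be handled simultaneously. The non-degeneracy parameter $\delta(V) \ge \delta$ enters precisely to quantify the geometric transversality needed to lower-bound the main term independently of $\mu$, and the delicate step is to balance this geometric lower bound against the exponentially many Fourier error terms in order to extract an explicit threshold $s_0(k,\delta) < k$ whose distance to $k$ depends quantitatively on $\delta$.
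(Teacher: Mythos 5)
Your overall framework---Frostman measure, a multilinear configuration-counting functional, a smooth/oscillatory decomposition, spherical Fourier decay controlling the error, and a compactness reduction---is the right one, and the error analysis is in the spirit of the paper's Lemma~\ref{L3.2} (though the paper telescopes $\mu_{2\eps}-\mu_\eps$ over the iterated spherical parametrization of the configuration space, changing one factor at a time and exploiting the decay of a single $1$-dimensional circle after rotation-averaging, rather than an all-edges paraproduct decomposition over $K_k$). The serious gap is the lower bound on the main term.

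You claim that the non-degeneracy $\de(V)\geq\de$ ``bounds the Jacobian of the pairwise-distance map from below, and thereby yields a uniform positive lower bound after averaging in $\la\in[1,2]$.'' That cannot be correct. Transversality controls how the configuration manifold sits inside $(\R^k)^k$; it says nothing about whether $\mu^{\otimes k}$ charges a neighborhood of that manifold. The Frostman condition $\mu(B(x,r))\leq Cr^s$ is a one-sided bound, and a measure obeying it can concentrate so as to make the main term as small as one likes---the Falconer--Maga constructions already show that even full-dimensional sets can avoid similar copies of a given triangle, so no ``uniform positive lower bound'' over all Frostman measures and all $\la\in[1,2]$ is available from geometry alone. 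In the paper the lower bound is obtained by a fundamentally different mechanism: mollify $\mu$ to $\tmu_\eps$, observe that $\tmu_\eps\lesssim\eps^{s-k}$ with $\int\tmu_\eps\gtrsim1$, pass to the superlevel set $A_\eps=\{\tmu_\eps\gtrsim\eps^{k-s}\}$ which has measure $\gtrsim\eps^{k-s}$, and invoke the quantitative positive-density theorem of Lyall--Magyar~\cite{LM18graph} for $A_\eps$. Crucially, that theorem only delivers $T_{\la V}(A_\eps)\geq c\,\eps^{k(k-s)}$ for $\la$ ranging over an interval $I$ of length merely $\exp(-\eps^{-C_k(k-s)})$, and this \emph{exponentially small} gain in $\la$ is all one gets. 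The proof then balances the resulting exponentially small lower bound $\int_0^1\la^{1/2}T_{\la V}(\tmu_\eps)\,d\la\geq\exp(-\eps^{-C_k(k-s)})$ against a power-of-$\eps$ Fourier error, choosing $\eps$ first and then $s$ so close to $k$ that $\exp(-\eps^{-C_k(k-s)})$ dominates. This is why the resulting threshold $s_0(k,\de)$ is not explicit (as the paper's Remark 2.3 stresses), which contradicts the ``explicit threshold'' you announce at the end. Without importing something like the Lyall--Magyar density theorem for superlevel sets---or an equally strong replacement---there is no way to lower-bound the main term, and your argument stalls exactly at the point you yourself flag as ``the delicate step.''

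A secondary, more technical concern: with the complete-graph product $\prod_{i<j}\si^\eps_{\la t_{ij}}(x_i-x_j)$, an ``error term'' containing one oscillatory factor $\si^\eps-K^R$ on edge $(1,2)$ still has $\binom{k}{2}-1$ other sphere factors entangling $x_1,x_2$ with the remaining vertices, so a direct Plancherel against $|\wh\mu|^2$ is not available. The paper avoids this by parametrizing copies of $V$ iteratively (each vertex on a lower-dimensional sphere determined by the previous ones) so that exactly one spherical measure is ``active'' when one $\mu$-factor is perturbed; the others are dispatched by $L^\infty$ bounds. If you retain the $K_k$-product form you will need an explicit scheme (e.g.\ a Gowers--Cauchy--Schwarz iteration) to decouple the constraints, and this is not supplied in your sketch.
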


\begin{remark} Note that the dimension condition is sharp for $k=2$ as a construction due to Maga \cite{Maga} shows the existence of a set $E\subs \R^2$ with $dim (E)=2$ which does not contain any equilateral triangle or more generally a similar copy of any given triangle. \end{remark} 

\begin{remark} It is also interesting to note that the proof of Theorem \ref{Thm2.1} above proves much more than just the existence of vertices of $V'$ similar to $V$ inside $E$. The proof proceeds by constructing a natural measure on the set of simplexes and proving an upper and a lower bound on this measure. This argument shows that an infinite "statistically" correct "amount" of simplexes $V'$s that satisfy the conclusion of the theorem exist, shedding considerable light on the structure of set of positive upper Lebesgue density. 
\end{remark} 

\begin{remark} Theorem \ref{Thm2.1} establishes a non-trivial exponent $s_0<k$, but the proof yields $s_0$ very close to $k$ and not explicitly computable. The analogous results in the finite field setting (see e.g. \cite{HI08}, \cite{IP19} and the references contained therein) suggest that it may be possible to obtain explicit exponents, but this would require a fundamentally different approach to certain lower bounds obtained in the proof of Theorem \ref{Thm2.1}. \end{remark} 

\vskip.125in 


A distance graph is a connected finite graph embedded in Euclidean space, with a set of vertices $V=\{v_0,v_1,\ldots,v_n\}\subs\R^d$ and a set of edges $E\subs \{(i,j);\ 0\leq i<j\leq n\}$. We say that a graph $\Ga=(V,E)$ has degree at most $k$ if $|V_j|\leq k$ for all $1\leq j\leq n$, where $V_j=|\{v_i:\ (i,j)\in E\}|$. The graph $\Ga$ is called \emph{proper} if the sets $V_j\cup \{v_j\}$ are in general position. Let $r(\Ga)$ be the minimum of the distances from the vertices $v_j$ to the corresponding affine subspace spanned by the sets $V_j$ and note that $r(\Ga)>0$ if $\Ga$ is proper. Let $d(\Ga)$ denote length of the longest edge of $\Ga$ and let $\de(\Ga):=r(\Ga)/d(\Ga)$.\\

We say that a distance graph $\Ga'=(V',E)$ is \emph{isometric} to $\Ga$, and write $\Ga'\simeq \Ga$ if there is a one-one and onto mapping $\phi:V\to V'$ so that $|\phi(v_i)-\phi(v_j)|=|v_i-v_j|$ for all $(i,j)\in E$. One may picture $\Ga'$ obtained from $\Ga$ by a translation followed by rotating the edges around the vertices, if possible. By $\la\cdot \Ga$ we mean the dilate of the distance graph $\Ga$ by a factor $\la>0$ and we say that $\Ga'$ is \emph{similar} to $\Ga$ if $\,\Ga'$ is isometric to $\la\cdot\Ga$. 

\begin{thm}\label{Thm2.2} Let $\de>0$, $n\geq 1$, $1\leq k<d$ and let $E$ be a compact subset of $\R^k$ of Hausdorff dimension $s<d$. There exists $s_0=s_0(n,d,\de)<d$ such if $s\geq s_0$ then $E$ contains a distance graph $\Ga'$ similar to $\Ga$, for any proper distance graph $\Ga=(V,E)$ of degree at most $k$, with $V\subs\R^d$, $|V|=n$ and $\de(\Ga)\geq\de$.
\end{thm}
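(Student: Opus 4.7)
The strategy extends the proof of Theorem \ref{Thm2.1} from a single simplex to a bounded-degree distance graph, using the graph structure to iterate the spherical-averaging argument. The crucial observation is that bounded degree $k$ allows a vertex ordering $v_0, v_1, \dots, v_n$ in which each $v_j$ has at most $k$ neighbors among its predecessors; given positions for $v_0,\ldots,v_{j-1}$, the location of $v_j$ is then constrained to the intersection of at most $k$ spheres in $\R^d$, generically a sphere of transverse dimension $d-k\geq 1$. This positive codimensional slack (which is absent for a full simplex on $k+1$ vertices in $\R^k$) is what makes a threshold $s_0<d$ possible, and it explains the hypothesis $k<d$ in the statement.

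First I would fix a Frostman measure $\mu$ on $E$ with $\mu(B(x,r))\ls r^s$, where $s$ is to be chosen close to $d$. Writing $t_{ij}=|v_i-v_j|$ and letting $\si_r$ denote normalized surface measure on the sphere of radius $r$ in $\R^d$, the central object is the multilinear form
$$
I_\la(\mu)\;:=\;\int\prod_{(i,j)\in E}\si_{\la t_{ij}}(x_i-x_j)\prod_{i=0}^n d\mu(x_i),
$$
understood distributionally in the scale $\la$. A strictly positive lower bound for the average of $I_\la(\mu)$ over a short interval of scales $\la\sim 1$ forces the existence of an isometric copy of $\la\cdot\Ga$ in $\mathrm{supp}\,\mu \subs E$, and hence a similar copy of $\Ga$ inside $E$.

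To produce this positivity I would split $\mu=\mu_\eps+(\mu-\mu_\eps)$ with $\mu_\eps=\mu*\psi_\eps$ a standard mollification, and expand $I_\la(\mu)$ into $2^{n+1}$ terms. The all-smooth term, with $\mu_\eps$ in every slot, yields a main term bounded below by a positive constant depending only on $\|\mu\|$ and $\de$, by the same transverse non-degeneracy argument used for Theorem \ref{Thm2.1} (the condition $\de(\Ga)\geq\de$ provides quantitative control of the Jacobian along the intersection of spheres). Each of the remaining $2^{n+1}-1$ terms contains at least one factor $\mu-\mu_\eps$ and is treated by iterated Cauchy--Schwarz along the chosen vertex ordering: one peels off the vertex $v_j$ of highest index, absorbs the at most $k$ incident spherical factors $\si_{\la t_{i_l j}}(x_{i_l}-\cdot)$ into a composition of spherical averaging operators applied to $\mu-\mu_\eps$, and estimates the resulting $L^2$-norm by combining the Fourier decay $|\widehat{\si_r}(\xi)|\ls (r|\xi|)^{-(d-1)/2}$ with the finite energy $\int|\widehat{\mu}(\xi)|^2|\xi|^{s-d}\,d\xi<\infty$; the hypothesis $k<d$ is exactly what guarantees enough decay of the $k$-fold spherical composition to make this integral converge for $s$ sufficiently close to $d$, producing a gain $\eps^{\al}$ with $\al=\al(n,d,\de)>0$.

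The main obstacle will be organising the iterated Cauchy--Schwarz: one has to verify that after peeling off each successive vertex, the remaining multilinear integral still admits the needed transversality, and that the $\eps^\al$ gains compound across all $n+1$ steps rather than being absorbed by growing operator constants. This is where the vertex ordering must be chosen carefully (a standard consequence of bounded degree, but nontrivial to exploit in the bookkeeping), and where the properness assumption on $\Ga$ enters to ensure the relevant intersections of spheres are uniformly transverse of the expected codimension. Once every error term is dominated by $C\eps^\al$, choosing $\eps$ small overwhelms the error by the main term, giving $s_0=s_0(n,d,\de)<d$ and positivity of $I_\la(\mu)$ on a set of scales $\la$ of positive measure, which supplies the desired similar copy $\Ga'\subs E$.
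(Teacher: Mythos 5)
Your Fourier-analytic treatment of the error terms is on the right track and roughly parallels the paper's Lemma \ref{L4.2}: isolate a vertex $x_j$ together with its $\leq k$ incident spherical constraints, note that the fibre sphere $S_{X_j}$ has dimension $d-|X_j|\geq d-k\geq 1$ (your codimensional slack), exploit the Fourier decay of its surface measure via Cauchy--Schwarz and Plancherel, and pair against the energy $\int|\widehat\mu(\xi)|^2\widehat\psi(\eps\xi)\,d\xi\ls\eps^{s-d}$. The organisation differs in detail --- the paper bounds $n-1$ of the smoothed factors crudely by $\|\mu_\eps\|_\infty^{n-1}$ and does a single Cauchy--Schwarz on the remaining pair, works with a telescoping dyadic difference $T_{\la\Ga_0}(\mu_{2\eps})-T_{\la\Ga_0}(\mu_\eps)$ rather than a $2^{n+1}$-fold expansion of $\mu=\mu_\eps+(\mu-\mu_\eps)$ (which also sidesteps your need to interpret $I_\la(\mu)$ ``distributionally''), and formulates everything against a Leray-type measure $\om_\F$ on the configuration variety with a Fubini formula for peeling vertices --- but the mechanism producing the $\eps^\al$ gain is the same.

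The genuine gap is the main term. You claim the all-smooth term is bounded below by a positive constant depending only on $\|\mu\|$ and $\de$, by a transversality argument. Neither part is correct. The paper's lower bound is obtained by applying a theorem of Lyall and Magyar on distance graphs inside sets of \emph{positive upper Lebesgue density} to the superlevel set $A_\eps=\{f_\eps\geq\al/2\}$ of the renormalised density $f_\eps=c\,\eps^{d-s}\tilde\mu_\eps$, of density $\al\sim\eps^{d-s}$. That result produces $T_{\la\Ga_0}(\tilde\mu_\eps)\geq c>0$ only for $\la$ in an interval $I$ with $|I|\geq\exp(-\eps^{-C(d-s)})$, so after integrating in $\la$ the main term is only $\gs\exp(-\eps^{-C(d-s)})$ --- super-exponentially small as $\eps\to 0$ for fixed $s$, not a constant. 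Hence ``choosing $\eps$ small'' does not by itself overwhelm the $C\eps^\al$ error; one must first fix $\eps$ so the error is small, and only then take $s$ so close to $d$ that $\exp(-\eps^{-C(d-s)})$ is comparable to $e^{-1}$. This order of quantifiers is exactly why the resulting $s_0$ is non-explicit (see Remark 2.3); your outline, which asserts an $\eps$- and $s$-independent main term, would wrongly produce an explicit threshold. No amount of transversality bookkeeping can replace the Lyall--Magyar input: $\tilde\mu_\eps$ is supported in a thin $\eps$-neighbourhood of $E$, and for a fixed $\la$ it is entirely possible that no isometric copy of $\la\Ga_0$ fits inside that neighbourhood, so there is no generic positivity of $T_{\la\Ga_0}(\mu_\eps)$ to appeal to.
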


Note that Theorem \ref{Thm2.2} implies Theorem \ref{Thm2.1} as a non-degenerate simplex is a proper distance graph of degree  $k-1$.

\section{Proof of Theorem \ref{Thm2.1}.}

Let $E\subs B(0,1)$ be a compact subset of the unit ball $B(0,1)$ in $\R^k$ of Hausdorff dimension $s<k$. It is well-known that there is a probability measure $\mu$ supported on $E$ such that $\mu(B(x,r))\leq C_\mu r^s$ for all balls $B(x,r)$. The following observation shows that we may take $C_\mu =4$ for our purposes. \footnote{We'd like to thank Giorgis Petridis for bringing this observation to our attention.}

\begin{lemma}\label{L3.1} There exists a set $E'\subs B(0,1)$ of the form $E'=\rho^{-1}(F-u)$ for some $\rho>0$, $u\in \R^k$ and $F\subs E$, and a probability measure $\mu'$ supported on $E'$ which satisfies
\eq\label{3.1}
\mu'(B(x,r)\leq 4 r^s,\quad\textit{for all}\quad x\in\R^k,\ r>0.\ee
\end{lemma}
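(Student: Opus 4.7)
My plan is to locate a ball in which the Frostman bound for $\mu$ is nearly saturated, and then to rescale and translate $E$ so that this ball becomes $B(0,1)$. Concretely, the paragraph preceding the lemma invokes Frostman's lemma to produce a probability measure $\mu$ supported on $E$ with $\mu(B(x,r))\leq C_\mu r^s$ for some finite $C_\mu$; starting from this, I would define the sharp constant
\[
C:=\sup_{x\in\R^k,\,r>0}\frac{\mu(B(x,r))}{r^s},
\]
which is finite by hypothesis and satisfies $C\geq 1$ because $\mu(B(0,1))=1$ while $E\subseteq B(0,1)$.

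By definition of the supremum there exist $x_0\in\R^k$ and $r_0>0$ with $\mu(B(x_0,r_0))\geq (C/2)\,r_0^s$. I would then set $u:=x_0$, $\rho:=r_0$, $F:=E\cap B(x_0,r_0)$ and $E':=\rho^{-1}(F-u)$, which lies in $B(0,1)$ since $F\subseteq B(x_0,r_0)$. The natural candidate for $\mu'$ is the renormalised push-forward
\[
\mu'(A):=\frac{\mu\bigl(F\cap(u+\rho A)\bigr)}{\mu(F)},
\]
a probability measure supported on $E'$.

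It then remains to verify (\ref{3.1}). For any ball $B(x,r)\subseteq\R^k$, its preimage under $y\mapsto \rho^{-1}(y-u)$ is $B(u+\rho x,\rho r)$, and using the Frostman bound for $\mu$ together with the lower bound on $\mu(F)=\mu(B(x_0,r_0))$ one gets
\[
\mu'(B(x,r))\;\leq\;\frac{\mu(B(u+\rho x,\rho r))}{\mu(F)}\;\leq\;\frac{C(\rho r)^s}{(C/2)\,\rho^s}\;=\;2r^s\;\leq\;4r^s.
\]

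I do not anticipate a genuine obstacle: the only point requiring any care is the existence of the near-saturating ball, but because $C$ is finite and strictly positive this is immediate from the definition of supremum, and the argument in fact yields the cleaner constant $2$, so the factor $4$ in (\ref{3.1}) leaves room for slack that the authors may use elsewhere.
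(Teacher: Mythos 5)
Your argument is essentially the paper's own proof: the paper also picks the sharp Frostman constant (via $K=\inf S$, equal to your $\sup$-definition of $C$), locates a ball $Q=B(v,\rho)$ on which the Frostman bound is within a factor of $2$ of saturation, restricts and renormalises $\mu$ to $F=E\cap Q$, and then rescales by $\rho$ to land in $B(0,1)$. The only cosmetic difference is that the paper uses the slightly slack bound $\mu(B(x,r))\leq 2K\,r^s$ rather than $\leq K\,r^s$, which is why it reports the constant $4$ where you obtain $2$; both are harmless since only a fixed absolute constant is needed downstream.
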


\begin{proof} Let $K:= \inf (S)$, where
\[S:=\{C\in\R:\ \mu(B(x,r))\leq C r^s,\ \ \forall\ B(x,r)\}.\]
By Frostman's lemma \cite{Mattila} we have that $S\neq\emptyset$, $K>0$, moreover 
\[\mu(B(x,r))\leq 2K\,r^s,\]
for all balls $B(x,r)$. There exists a ball $Q = B(v,\rho)$ or radius $\rho$ such that $\mu(Q) \geq \frac{1}{2} K\rho^s$. We translate $E$ so $Q$ is centered at the origin, set $F = E \cap Q$ and denote by $\mu_F$ the induced probability measure on $F$
\[\mu_F(A) = \frac{\mu(A\cap F)}{\mu(F)}.\]
Note that for all balls $B=B(x,r)$,
\[\mu_F(B) \leq \frac{2K\,r^s}{\frac{1}{2} K\rho^s} =4 \left(\frac{r}{\rho}\right)^s.\]
Finally we define the probability measure $\mu'$, by $\,\mu'(A) := \mu_F(\rho A)$. It is supported on $E'=\rho^{-1} F\subs B(0,1)$ and satisfies 
\[\mu'(B(x,r)) = \mu_F(B(\rho x,\rho r)) \leq 4r^s.\]
\end{proof}

Clearly $E$ contains a similar copy of $V$ if the same holds for $E'$, thus one can pass from $E$ to $E'$ and hence assuming that \eqref{3.1} holds, in proving our main results. Given $\eps>0$ let $\psi_\eps(x)=\eps^{-k}\psi(x/\eps)\geq 0$, where $\psi\geq 0$ is a  Schwarz function whose Fourier transform,  $\widehat{\psi}$, is a compactly supported smooth function, satisfying $\widehat{\psi}(0)=1$ and $0\leq \widehat{\psi}\leq 1$.

We define $\mu_\eps:=\mu\ast \psi_\eps$. Note that $\mu_\eps$ is a continuous function satisfying $\|\mu_\eps\|_\infty \leq C \eps^{s-k}$ with an absolute constant $C=C_\psi >0$, by Lemma \ref{L3.1}.\\

Let $V=\{v_0=0,\ldots,v_{k-1}\}$ be a given a non-degenerate simplex and note that in proving Theorem \ref{Thm2.1} we may assume that $d(V)=1$ hence $\de(V)=r(V)$. A simplex $V'=\{x_0=0,x_1,\ldots,x_{k-1}\}$ is isometric to $V$ if for every $1\leq j\leq k$ one has that $x_j\in S_{x_1,\ldots,x_{j-1}}$, where 
\[S_{x_1,\ldots,x_{j-1}} =\{y\in\R^k:\ |y-x_i|=|v_j-v_i|,\ \ 0\leq i<j\}\]
is a sphere of dimension $k-j$, of radius $r_j=r_j(V)\geq r(V)>0$. Let $\si_{x_1,\ldots,x_{j-1}}$ denote its normalized surface area measure.\\ 

Given $0<\la,\eps\leq 1$ define the multi-linear expression,

\begin{align}\label{3.2}
&T_{\la V}(\mu_\eps)\ :=\\
&\int \mu_\eps(x) \mu_\eps(x-\la x_1)\cdots \mu_\eps(x-\la x_{k-1})\, d\si(x_1)\,d\si_{x_1}(x_2)\ldots d\si_{x_1,\ldots,x_{k-2}}(x_{k-1})\,dx,\nonumber
\end{align}
which may be viewed as a weighted count of the isometric copies of $\la \De$.


We have the following crucial upper bound

\begin{lemma}\label{L3.2} There exists a constant $C_k>0$, depending only on $k$, such that
\eq\label{3.3}
|T_{\la V} (\mu_{2\eps}) - T_{\la V}(\mu_\eps)| \leq\, C_k\,r(V)^{-\frac{1}{2}}\ \la ^{-\frac{1}{2}} \eps^{(k-\frac{1}{2})(s-k)+\frac{1}{4}}.
\ee
\end{lemma}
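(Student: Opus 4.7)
The plan is to telescope the difference and apply Plancherel's theorem, exploiting the spectral localization of $\nu_\eps := \mu_{2\eps}-\mu_\eps$ together with the Fourier decay of the innermost spherical measure. Setting
\[T_{\la V}(\mu_{2\eps}) - T_{\la V}(\mu_\eps) = \sum_{l=0}^{k-1} T^{(l)},\]
with $T^{(l)}$ obtained from $T_{\la V}$ by substituting $\nu_\eps$ for the $l$-th copy of $\mu_\eps$ (and placing $\mu_{2\eps}$ in positions $<l$, $\mu_\eps$ in positions $>l$), it suffices to bound each $T^{(l)}$. The key properties of $\nu_\eps$ are that its Fourier transform $\widehat{\nu_\eps}(\xi)=\widehat{\mu}(\xi)\bigl(\widehat{\psi}(2\eps\xi)-\widehat{\psi}(\eps\xi)\bigr)$ is supported in the annulus $\{|\xi|\sim\eps^{-1}\}$ (since $\widehat{\psi}$ is smooth, compactly supported, and equal to $1$ at the origin), and that $\|\nu_\eps\|_2^2\le\|\nu_\eps\|_\infty\|\nu_\eps\|_1\lesssim\eps^{s-k}$ by Lemma \ref{L3.1}.

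For each $T^{(l)}$, I would pair the $\nu_\eps$-factor with the innermost integration over the $1$-sphere $S_{x_1,\ldots,x_{k-2}}$ (of radius $r_{k-1}\ge r(V)$, dilated by $\la$), and apply Cauchy-Schwarz to separate this pair from the product of the remaining $k-1$ $\mu$-factors. The pointwise bounds $\|\mu_\eps\|_\infty,\|\mu_{2\eps}\|_\infty\lesssim\eps^{s-k}$ on those $k-1$ factors, combined with $\|\nu_\eps\|_2\lesssim\eps^{(s-k)/2}$, account for the combined power $\eps^{(k-1/2)(s-k)}$ in the desired bound. The additional $r(V)^{-1/2}\la^{-1/2}\eps^{1/4}$ comes from Plancherel applied to the convolution of $\nu_\eps$ with the dilated $1$-sphere measure: its Fourier transform decays like $(\la r(V)|\xi|)^{-1/2}$ along tangential directions, so restriction to $|\xi|\sim\eps^{-1}$ yields a gain of $r(V)^{-1/2}\la^{-1/2}\eps^{1/2}$, halved in the $\eps$-exponent by the final Cauchy-Schwarz absorbing the tangential-vs-normal anisotropy of this decay in $\R^k$.

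The main obstacle is the nested dependence of the spherical measures $\si_{x_1,\ldots,x_{j-1}}$ on prior vertex choices. In particular, the innermost $1$-sphere lies in a $2$-plane that varies with $(x_1,\ldots,x_{k-2})$, so its Fourier decay is genuinely anisotropic in $\R^k$ and not directly usable in a uniform fashion. The crux of the argument will be to organize the Cauchy-Schwarz and Plancherel steps so that this anisotropy is averaged out by the outer integrations against the nested higher-dimensional spheres, recovering effective isotropic decay (at the cost of the extra square root responsible for the stated $\eps^{1/4}$).
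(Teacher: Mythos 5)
Your plan follows the same overall route as the paper: telescope the difference, isolate the factor with $\nu_\eps=\mu_{2\eps}-\mu_\eps$, bound the remaining factors by $\|\mu_\eps\|_\infty\lesssim\eps^{s-k}$, and then apply Cauchy--Schwarz and Plancherel against the innermost $1$-dimensional spherical measure. You also correctly identify the crux: converting the anisotropic Fourier decay of $\widehat{\si}_{x_1,\ldots,x_{k-2}}$ (which degenerates along $\mathrm{Span}\{x_1,\ldots,x_{k-2}\}$) into something usable. However, there are two genuine gaps where the proposal stops short of an actual argument.

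First, you assert that $\widehat{\nu_\eps}(\xi)=\widehat{\mu}(\xi)\bigl(\widehat{\psi}(2\eps\xi)-\widehat{\psi}(\eps\xi)\bigr)$ is \emph{supported} in the annulus $|\xi|\sim\eps^{-1}$. This would require $\widehat{\psi}\equiv 1$ on a neighbourhood of the origin, which is incompatible with the standing requirement $\psi\geq 0$ (if $\psi\geq 0$ then $\widehat{\psi}$ has a strict maximum at $0$, so it cannot be locally constant there; and $\psi\geq 0$ is needed elsewhere so that $\mu_\eps\geq 0$). In truth $\widehat{\nu_\eps}$ is only \emph{essentially} concentrated at scale $\eps^{-1}$, and the low-frequency tail is precisely what forces the paper to split $J=\int|\widehat{\nu_\eps}|^2 I_\la$ at the intermediate scale $|\xi|=\eps^{-1/2}$: on $|\xi|\leq\eps^{-1/2}$ one uses the Lipschitz bound $|\widehat{\psi}(2\eps\xi)-\widehat{\psi}(\eps\xi)|\lesssim\eps|\xi|\lesssim\eps^{1/2}$, and on $|\xi|\geq\eps^{-1/2}$ one uses $I_\la(\xi)\lesssim\eps^{1/2}r(V)^{-1}\la^{-1}$. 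Both regimes contribute $\eps^{1/2+s-k}$ to $J$, and the $\eps^{1/4}$ in the final bound is the square root of that balanced $\eps^{1/2}$. Without this dyadic split the exponent bookkeeping cannot close, and indeed your heuristic accounting (a decay factor $\eps^{1/2}$ ``halved by the final Cauchy--Schwarz'') does not survive being made precise.

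Second, the averaging step that removes the anisotropy is named but not supplied. The mechanism in the paper is that the measure $d\om(x_1,\ldots,x_{k-2})$ is invariant under the diagonal $SO(k)$-action, so one may insert an average $\int_{SO(k)}\cdot\,dU$ for free; changing variables $U\xi\to|\xi|\eta$ with $\eta\in S^{k-1}$ replaces $\mathrm{dist}(\xi,M_{x_1,\ldots,x_{k-2}})$ by $|\xi|\,\mathrm{dist}(\eta,M_{x_1,\ldots,x_{k-2}})$, and integrating in $\eta$ over the unit sphere yields the isotropic bound $I_\la(\xi)\lesssim(1+r(V)\la|\xi|)^{-1}$. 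This rotational-invariance trick is the actual content behind the phrase ``averaged out by the outer integrations''; as written, your proposal only announces it as a goal. Spelling out both of these steps would turn the plan into a proof.
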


As an immediate corollary we have that 

\begin{lemma}\label{L3.3} Let $k-\frac{1}{4k}\leq s<k$. There exists 
\eq\label{3.4}
T_{\la V} (\mu) := \lim_{\eps\to 0}  T_{\la V} (\mu_\eps),
\ee
moreover 
\eq\label{3.5}
|T_{\la V} (\mu) - T_{\la V} (\mu_\eps)| \leq\,C_k\,r(V)^{-\frac{1}{2}}\ \la ^{-\frac{1}{2}} \eps^{(k-\frac{1}{2})(s-k)+\frac{1}{4}}. 
\ee
\end{lemma}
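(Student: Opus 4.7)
The plan is to derive Lemma \ref{L3.3} from Lemma \ref{L3.2} by a standard dyadic telescoping/Cauchy-sequence argument, the only substantive point being to verify that the exponent of $\eps$ in \eqref{3.3} is strictly positive (and bounded below by a quantity depending only on $k$) under the hypothesis $s \geq k - \tfrac{1}{4k}$.

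First I would set $\al := (k-\tfrac{1}{2})(s-k) + \tfrac{1}{4}$ and check that $\al > 0$ in the stated range of $s$. Indeed, $k - s \leq \tfrac{1}{4k}$ gives
\[
(k-\tfrac{1}{2})(k-s) \,\leq\, \frac{k - 1/2}{4k} \,=\, \frac{1}{4} - \frac{1}{8k},
\]
so $\al \geq \tfrac{1}{8k} > 0$. This quantitative lower bound will be important at the end because it forces the geometric ratio $2^{-\al}$ to be bounded away from $1$ by an amount depending only on $k$.

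Next, given any $\eps > 0$, I would consider the dyadic sequence $\eps_n := 2^{-n}\eps$ and apply Lemma \ref{L3.2} with $\eps$ replaced by $\eps_{n+1}$ to obtain
\[
|T_{\la V}(\mu_{\eps_n}) - T_{\la V}(\mu_{\eps_{n+1}})| \,\leq\, C_k\, r(V)^{-1/2}\,\la^{-1/2}\, \eps_{n+1}^{\al} \,=\, C_k\, r(V)^{-1/2}\,\la^{-1/2}\, 2^{-(n+1)\al}\,\eps^{\al}.
\]
Summing in $n$ yields a convergent geometric series, which shows that $\{T_{\la V}(\mu_{\eps_n})\}_{n \geq 0}$ is Cauchy in $\R$ and therefore has a limit. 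A standard interleaving argument (applied to the sequences $\eps_n = 2^{-n}\eps$ and $\eps_n' = 2^{-n}\eps'$ for any two starting scales $\eps, \eps' > 0$) shows that this limit is independent of the initial choice of $\eps$, so that we may unambiguously define $T_{\la V}(\mu) := \lim_{\eps \to 0} T_{\la V}(\mu_\eps)$, justifying \eqref{3.4}.

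Finally, for the quantitative bound \eqref{3.5}, I would telescope:
\[
|T_{\la V}(\mu) - T_{\la V}(\mu_\eps)| \,\leq\, \sum_{n=0}^{\infty} |T_{\la V}(\mu_{\eps_{n+1}}) - T_{\la V}(\mu_{\eps_n})| \,\leq\, C_k\, r(V)^{-1/2}\,\la^{-1/2}\,\eps^{\al} \sum_{n=0}^{\infty} 2^{-(n+1)\al},
\]
and the remaining sum equals $\frac{2^{-\al}}{1 - 2^{-\al}}$, which is bounded by a constant depending only on $k$ since $\al \geq \tfrac{1}{8k}$. Absorbing this factor into a new constant (still denoted $C_k$) gives \eqref{3.5}. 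There is no real obstacle here beyond bookkeeping; the only thing to watch is that the constant in \eqref{3.5} stays $k$-dependent and free of $s$, $\la$, $V$, which is automatic from the above.
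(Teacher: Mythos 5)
Your proof is correct and takes essentially the same approach as the paper: the paper also dismisses this as an immediate corollary of Lemma \ref{L3.2} by writing the difference as a telescoping dyadic sum $\sum_{j\geq 0}\bigl(T_{\la V}(\mu_{2\eps_j})-T_{\la V}(\mu_{\eps_j})\bigr)$ with $\eps_j=2^{-j}\eps$. You supply the positivity check on the exponent (namely $\al\geq\tfrac{1}{8k}$), which the paper leaves implicit but which is indeed what makes the geometric series converge with a $k$-dependent constant.
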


Indeed, the left side of \eqref{3.5} can be written as telescopic sum: 
\[\sum_{j\geq 0} T_{\la V}(\mu_{2\eps_j})-T_{\la V}(\mu_{\eps_j})\quad  \textit{with}\quad \eps_j = 2^{-j}\eps.\]

\begin{proof}[Proof of Lemma \ref{L3.2}] 

Write $\De\mu_\eps := \mu_{2\eps}-\mu_\eps$, then
\[\prod_{j=1}^{k-1} \mu_{2\eps}(x-\la x_j) - \prod_{j=1}^{k-1} \mu_{\eps}(x-\la x_j)=\sum_{j=1}^k \De_j(\mu_\eps),
\]
where
\eq\label{3.6}\De_j (\mu_\eps) = \prod_{i\neq j}\mu_{\eps_{ij}}(x-\la x_i)\,\De\mu_\eps (x-\la x_j),\ee
where $\eps_{ij}=2\eps$ for $i<j$ and $\eps_{ij}=\eps$ for $i>j$. Since the arguments below are the same for all $1\leq j\leq k-1$, assume $j=k-1$ for simplicity of notations. Writing $f\ast_\la g (x) :=\int f(x-\la y)g(y)\,dy$, and using 
$\|\mu_\eps\|_\infty \leq C \eps^{s-k}$, we have for $\De T(\mu_\eps):=T_{\la V}(\mu_\eps)-T_{\la V}(\mu_{2\eps})$,
\eq\label{3.7}|\De T (\mu_\eps)| \ls\,\eps^{(k-2)(s-d)}\,
\int \left|\int \mu_\eps(x)\ \De\mu_\eps\ast_\la \si_{x_1,\ldots,x_{k-2}}(x)\,dx\right|\,d\om(x_1,\ldots,x_{k-2})
\ee
where $d\om(x_1,\ldots,x_{k-2})=d\si(x_1)\ldots d\si_{x_1,\ldots,x_{k-3}}(x_{k-2})$ for $k>3$, while  for $k=3$ we have that $d\om(x_1)=d\si(x_1)$ the normalised surface area measure on the sphere $S=\{y:\ |y|=|v_1|\}$. 

 The inner integral is of the form 
\[|\langle\mu_\eps,\De_\eps\mu\ast_\la \si_{x_1,\ldots,x_{k-2}}\rangle| \ls \eps^{s-d}\, \|\De\mu_\eps\ast_\la \si_{x_1,\ldots,x_{k-2}}\|_2,\]

thus by Cauchy-Schwarz and Placherel's identity 
\[|\De_{k-1} T (\mu_\eps)|^2 \ls\,\eps^{2(k-1)(s-d)}\,\int |\widehat{\De\mu_\eps}(\xi)|^2\,I_\la (\xi)\,d\xi,\]
where
\[I_\la (\xi) = \int |\hat{\si}_{x_1,\ldots,x_{k-2}}(\la\xi)|^2\,d\om (x_1,\ldots,x_{k-2}).
\]
Since $S_{x_1,\ldots,x_{k-2}}$ is a 1-dimensional circle of radius $r_{k-1}\geq r(V)>0$, contained in an affine subspace orthogonal to $M_{x_1,\ldots,x_{k-2}}=Span\{x_1,\ldots,x_{k-2}\}$, we have that
\[|\hat{\si}_{x_1,\ldots,x_{k-2}}(\la\xi)|^2 \ls (1+r(V)\la\ dist (\xi,M_{x_1,\ldots,x_{k-2}}))^{-1}.\]

Since the measure $\om(x_1,\ldots,x_{k-2})$ is invariant with respect to that change of variables $(x_1,\ldots,x_{k-2})\to (Ux_1,\ldots,Ux_{k-2})$ for any rotation $U\in SO(k)$, one estimates
\begin{align*}
I_\la (\xi) &\ls \int \int (1+r(V)\la\ dist (\xi,M_{Ux_1,\ldots,Ux_{k-2}}))^{-1}\,d\om (x_1,\ldots,x_{k-2})\,dU\\
&= \int \int (1+r(V)\la\ dist (U\xi,M_{x_1,\ldots,x_{k-2}}))^{-1}\,d\om (x_1,\ldots,x_{k-2})\,dU\\
&= \int \int (1+r(V)\la\,|\xi|\ dist (\eta,M_{x_1,\ldots,x_{k-2}}))^{-1}\,d\om (x_1,\ldots,x_{k-2})\,d\si_{k-2}(\eta)\\
&\ls (1+r(V)\, \la\, |\xi|)^{-1},
\end{align*}
where we have written $\eta := |\xi|^{-1} U\xi$ and $\si_{k-1}$ denotes the surface area measure on the unit sphere $S^{k-1}\subs \R^k$.\\


Note that $\widehat{\De\mu_\eps}(\xi)=\hat{\mu}(\xi)(\hat{\psi}(2\eps\xi)-\hat{\psi}(\eps\xi))$, which is supported on $|\xi|\ls \eps^{-1}$ and is essentially supported on $|\xi|\approx \eps^{-1}$. Indeed, writing
\begin{align*}
J:&=\int |\widehat{\De\mu_\eps}(\xi)|^2\,I_\la (\xi)\,d\xi\\
  &= \int_{|\xi|\leq \eps^{-1/2}} |\widehat{\De\mu_\eps}(\xi)|^2\,I_\la (\xi)\,d\xi + \int_{\eps^{-1/2}\leq |\xi|\ls \eps^{-1}} |\widehat{\De\mu_\eps}(\xi)|^2\,I_\la (\xi)\,d\xi =: J_1+J_2.
  \end{align*}
Using  $|\hat{\psi}(2\eps\xi)-\hat{\psi}(\eps\xi)|\ls \eps^{1/2}$ for $|\xi|\leq \eps^{-1/2}$, we estimate
\[ J_1\, \ls\, \eps^\frac{1}{2}\,\int |\widehat{\mu}(\xi)|^2\,(\hat{\psi}(2\eps\xi)+\hat{\psi}(\eps\xi))\,d\xi\,\ls\,
\eps^{\frac{1}{2}+s-k},\]
as
\[\int |\hat{\mu}(\xi)|^2 \hat{\psi}(\eps\xi)\,d\xi\,=\,\int \mu_\eps(x)\,d\mu(x) \ls \eps^{s-k}.\]
On the other hand, as $I_\la(\xi)\ls \eps^{1/2}r(V)^{-1}\la^{-1}$ for $|\xi|\geq \eps^{-1/2}$ we have
\[J_2\,\ls\,\eps^{1/2}r(V)^{-1}\la^{-1}\, \int |\hat{\mu}(\xi)|^2 \hat{\phi}(\eps\xi)\,d\xi\,\ls\,
r(V)^{-1}\la^{-1}\eps^{\frac{1}{2}+s-k},\]
where we have written $\hat{\phi}(\xi)= (\hat{\psi}(2\xi)-\hat{\psi}(\xi))^2$. Plugging this estimates into \eqref{3.7} we obtain
\[|\De T (\mu_\eps)|^2\,\ls\,r(V)^{-1}\la^{-1}\eps^{\frac{1}{2}+(2k-1)(s-d)},\]
and \eqref{3.5} follows.
\end{proof}



The support of $\mu_\eps$ is not compact, however as it is a rapidly decreasing function it can be made to be supported in small neighborhood of the support of $\mu$ without changing our main estimates. Let $\phi_\eps(x):=\phi(c\,\eps^{-1/2}x)$ with some small absolute constant $c>0$, where $0\leq \phi(x)\leq 1$ is a smooth cut-off, which equals to one for $|x|\leq 1/2$ and is zero for $|x|\geq 2$. Define $\tilde{\psi}_\eps=\psi_\eps\,\phi_\eps$ and $\tmu_\eps = \mu\ast\tilde{\psi}_\eps$. It is easy to see that $\tmu_\eps\leq \mu_\eps$ and $\int\tmu_\eps\,\geq 1/2$, if $c>0$ is chosen sufficiently small.
Using the trivial upper bound, for $k-\frac{1}{4k}\leq s<k$ we have
\[|T_{\la\De}(\mu_\eps)-T_{\la\De}(\tmu_\eps)|\leq C_k\, \|\mu_\eps\|_\infty^{k-1}\, \|\mu_\eps - \tmu_\eps\|_\infty \leq C_k\,\eps^{1/2},\]
it follows that estimate \eqref{3.5} remains true with $\mu_\eps$ replaced with $\tmu_\eps$.\\


Let $f_\eps := c\,\eps^{k-s}\tmu_\eps$, where $c=c_\psi >0$ is a constant so that $0\leq f_\eps \leq 1$ and $\int f_\eps\,dx = c'\,\eps^{k-s}$. Let $\al:= c'\,\eps^{k-s}$ and note that the set $A_\eps := \{x:\ f_\eps (x)\geq \al/2\}$ has measure $|A_\eps|\geq \al/2$. We apply Theorem 2 (ii) together with the more precise lower bound (18) in \cite{LM18graph} for the set $A_\eps$. 

This gives that there exists and interval $I$ of length
$\,|I| \geq \exp\,(-\eps^{-C_k(d-s)})\,$, such that for all $\la\in I$, one has
$\ |T_{\la V}(A_\eps))| \geq\,c\,\al^{k}= c\,\eps^{k(k-s)}\ $, where
\begin{multline*}T_{\la V}(A_\eps) = \\
\int \1_{A_\eps}(x)\1_{A_\eps}(x-\la x_1)\ldots\1_{A_\eps}(x-\la x_{k-1})\,d\si(x_1)\ldots d\si_{x_1,\ldots,x_{k-2}}(x_{k-1})\,dx.
\end{multline*}
Since 
\[T_{\la\De}(\tmu_\eps)\geq 
c\,\al^{k} T_{\la v}(A_\eps),\] 
we have that 
\eq\label{3.6}
T_{\la V}(\tmu_\eps) \geq c>0,
\ee
\\
for all $\la\in I$, for a constant $c=c(k,\psi,r(V))>0$.\\

Now, let 
\[T_V(\tmu_\eps) := \int_0^1 \la^{1/2}\,T_{\la V}(\tmu_\eps)\,d\la.\]
For $k-\frac{1}{4k}\leq s <k$, by \eqref{3.5} we have that 
\[|T_{\la V}(\mu)- T_{\la V}(\tmu_\eps)| \leq \, C_k\,r(V)^{-\frac{1}{2}}\, 
\la ^{-\frac{1}{2}}\, \eps^\frac{1}{8},\]
it follows that 
\eq\label{3.7}
\int_0^1 \la^{1/2}\,|T_{\la V}(\mu)-T_{\la V}(\tmu_\eps)|\,d\la \leq\,C_k\,r(V)^{-\frac{1}{2}}\, \eps^\frac{1}{8},
\ee
and in particular $\ \int_0^1 \la^{1/2}\,T_{\la V}(\mu)\,d\la <\infty$. On the other hand by \eqref{3.6}, one has  
\eq\label{3.8}
\int_0^1 \la^{1/2}\,T_{\la V}(\tmu_\eps)\,d\la \geq  \exp\,(-\eps^{-C_k(k-s)}).
\ee
Assume  that $r(V)\geq \de$, fix a small $\eps=\eps_{k,\de}>0$ and the choose $s=s(\eps,\de)<k$ such that 
\[C_k\,\de^{-\frac{1}{2}}\, \eps^\frac{1}{8}<\frac{1}{2}\, \exp\,(-\eps^{-C_k(k-s)}),\]
which ensures that 
\[\int_0^1 \la^{1/2}\,T_{\la V}(\mu)\,d\la >0,\]
thus there exist $\la>0$ such that $T_{\la V}(\mu)>0$.
Fix such a $\la$, and assume indirectly that  $E^k=E\times\ldots\times E$ does not contain any simplex isometric to $\la V$, i.e. any point of the compact configuration space $S_{\la V}\subs \R^{k^2}$ of such simplices. By compactness, this implies that there is some $\eta>0$ such that the $\eta$-neighborhood of $E^k$ also does not contain any simplex isometric to $\la V$. As the support of $\tmu_\eps$ is contained in the $C_k\eps^{1/2}$-neighborhood of $E$, as $E= supp\,\mu$, it follows that $T_{\la V}(\tmu_\eps)=0$ for all $\eps<c_k\,\eta^2$ and hence $T_{\la V}(\mu)=0$, 
contradicting our choice of $\la$. This proves Theorem \ref{Thm2.1}.




\section{The configuration space of isometric distance graphs.}

Let $\Ga_0=(V_0,E)$ be a fixed proper distance graph, with vertex set\\ $V_0=\{v_0=0,v_1,\ldots,v_n\}\subs\R^d$ of degree $k<d$. Let $t_{ij}=|v_i-v_j|^2$ for $(i,j)\in E$. A distance graph $\Ga=(V,E)$ with $V=\{x_0=0,x_1,\ldots,x_n\}$ is isometric to $\Ga_0$ if and only if $\bx=(x_1,\ldots,x_n)\in S_{\Ga_0}$, where
\[
S_{\Ga_0}= \{(x_1,\ldots,x_n)\in\R^{dn};\ |x_i-x_j|^2=t_{ij},\ \forall\ \ 0\leq i<j\leq n,\ (i,j)\in E\}
\]

We call the algebraic set $S_{\Ga_0}$ the \emph{configuration space} of isometric copies of the $\Ga_0$. Note that $S_{\Ga_0}$ is the zero set of the family $\F=\{f_{ij};\ (i,j)\in E\}$, $f_{ij}(\bx)=|x_i-x_j|^2-t_{ij}$, thus it is a special case of the general situation described in Section 5.\\

If $\Ga\simeq \Ga_0$ with vertex set $V=\{x_0=0,x_1,\ldots,x_n\}$ is proper then $\bx=(x_1,\ldots,x_n)$ is a non-singular point of $S_{\Ga_0}$. Indeed, for a fixed $1\leq j\leq n$ let $\Ga_j$ be the distance graph obtained from $\Ga$ by removing the vertex $x_j$ together with all edges emanating from it. By induction we may assume that $\bx'=(x_1,\ldots,x_{j-1},x_{j+1},\ldots,x_n)$ is a non-singular point i.e the gradient vectors $\nabla_{\bx'}f_{ik}(\bx)$, $(i,k)\in E$, $i\neq j,k\neq j$ are linearly independent. Since $\Ga$ is proper the gradient vectors $\nabla_{x_j}f_{ij}(\bx)=2(x_i-x_j)$, $(i,j)\in E$ are also linearly independent hence $\bx$ is a non-singular point. In fact we have shown that the partition of coordinates $\bx=(y,z)$ with $y=x_j$ and $z=\bx'$ is admissible and hence \eqref{A7} holds.\\

Let $r_0=r(\Ga_0)>0$. It is clear that if $\Ga\simeq\Ga_0$ and $|x_j-v_j|\leq \eta_0$ for all $1\leq j\leq n$, for a sufficiently small $\eta=\eta(r_0)>0$, then $\Ga$ is proper and $r(\Ga)\geq r_0/2$. for given $1\leq j\leq n$, let $X_j:=\{x_i\in V;\ (i,j)\in E\}$ and define 
\[S_{X_j} := \{x\in\R^d;\ |x-x_i|^2=t_{ij},\ \textit{for all}\ \  x_i\in X_j\}.\]
As explained in Section 6, $S_{X_j}$ is a sphere of dimension $d-|X_j|\geq 1$ with radius $r(X_j)\geq r_0/2$. 
Let $\si_{X_j}$ denote the surface area measure on $S_{X_j}$ and write $\nu_{X_j} := \phi_j\,\si_{X_j}$ where $\phi_j$ is a smooth cut-off function supported in an $\eta$-neighborhood of $v_j$ with $\phi_j(v_j)=1$.\\ 

Write $\bx=(x_1,\ldots,x_n)$, $\phi(\bx):=\prod_{j=1}^n \phi_j (x_j)$, then by \eqref{A7} and \eqref{A8}, one has 
\eq\label{4.1}
\int g(\bx)\,\phi (\bx)\,d\om_\F(\bx) = c_j(\Ga_0) \int \int g(\bx)\, \phi(\bx')\,d\nu_{X_j}(x_j) \,d\om_{\F_j} (\bx'),
\ee
where $\bx'= (x_1,\ldots,x_{j-1},x_{j+1},\ldots,x_n)$ and $\F_j=\{f_{il};\ (i,l)\in E, l\neq j\}$. The constant $c_j(\Ga_0)>0$  is the reciprocal of volume of the parallelotope with sides $x_j-x_i$, $(i,j)\in E$  which is easily shown to be at least $c_k r_0^k$, as the distance of each vertex to the opposite face is at least $r_0/2$ on the support of $\phi$.
\bigskip




\section{Proof of Theorem \ref{Thm2.2}.}

Let $d>k$ and again, without loss of generality, assume that $d(\Ga)=1$ and hence $\de(\Ga)=r(\Ga)$. Given $\la,\eps >0$ define the multi-linear expression,
\begin{align}\label{5.1}
&T_{\la\Ga_0}(\mu_\eps)\ :=\\
& \int\cdots\int \mu_\eps(x) \mu_\eps(x-\la x_1)\cdots \mu_\eps(x-\la x_n)\,\phi(x_1,\ldots,x_n) d\om_\F(x_1,\ldots,x_n)\,dx.\nonumber
\end{align}

Given a proper distance graph $\Ga_0=(V,E)$ on $|V|=n$ vertices of degree $k<n$ one has the following upper bound;

\begin{lemma}\label{L4.2} There exists a constant $C=C_{n,d,k}(r_0)>0$ such that
\eq\label{5.2}
|T_{\la\Ga_0} (\mu_{2\eps}) - T_{\la\Ga_0} (\mu_\eps)| \leq\, C\, \la^{-1/2}\, \eps^{(n+\frac{1}{2})(s-d)+\frac{1}{4}}.
\ee
\end{lemma}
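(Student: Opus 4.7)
The proof follows the same blueprint as Lemma \ref{L3.2}; the only new input is the decomposition \eqref{4.1}, which for any fixed vertex $j\in\{1,\dots,n\}$ allows one to integrate $x_j$ first against the measure $\nu_{X_j}$ on the sphere $S_{X_j}$ (of dimension $d-|X_j|\geq d-k\geq 1$ and radius $\geq r_0/2$), and then to integrate the remaining vertices $\bx'$ against $d\om_{\F_j}$. The plan is to write $\De\mu_\eps:=\mu_{2\eps}-\mu_\eps$ and telescope
\[\prod_{j=1}^n \mu_{2\eps}(x-\la x_j)-\prod_{j=1}^n \mu_\eps(x-\la x_j)=\sum_{j=1}^n \De\mu_\eps(x-\la x_j)\prod_{i\neq j}\mu_{\eps_{ij}}(x-\la x_i),\]
with $\eps_{ij}\in\{\eps,2\eps\}$. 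By symmetry it is enough to bound one summand, say $j=n$.

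Next, I apply \eqref{4.1} with $j=n$ and pull out $n-1$ trivial $L^\infty$-bounds $\|\mu_\eps\|_\infty\ls\eps^{s-d}$ from the unchanged edge factors, plus one additional $\eps^{s-d}$ from the Cauchy--Schwarz pairing of $\mu_\eps(x)$ with $\De\mu_\eps\ast_\la \nu_{X_n}$, exactly as in the step producing \eqref{3.7}. Squaring, then applying Cauchy--Schwarz in the $\bx'$-integration and Plancherel on the inner $L^2$-norm, leads to
\[|\De_n T_{\la\Ga_0}(\mu_\eps)|^2\ls \eps^{2n(s-d)}\int|\widehat{\De\mu_\eps}(\xi)|^2\,I_\la(\xi)\,d\xi,\quad I_\la(\xi):=\int|\hat\nu_{X_n}(\la\xi)|^2\,\phi(\bx')^2\,d\om_{\F_n}(\bx').\]

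Because $S_{X_n}$ is a sphere of positive dimension, stationary phase gives $|\hat\nu_{X_n}(\la\xi)|^2\ls(1+r_0\la\cdot\mathrm{dist}(\xi,M_{X_n}))^{-1}$ with $M_{X_n}=\mathrm{Span}(X_n)$. The measure $\om_{\F_n}$ is $SO(d)$-invariant since the defining equations $|x_i-x_l|^2=t_{il}$ depend only on pairwise distances, so averaging over rotations $U\in SO(d)$ and changing variables as in Lemma \ref{L3.2} produces $I_\la(\xi)\ls(1+r_0\la|\xi|)^{-1}$. The remaining Fourier integral is then identical to the one in Lemma \ref{L3.2}: splitting into $|\xi|\leq\eps^{-1/2}$ and $|\xi|>\eps^{-1/2}$, the low-frequency part is controlled by $|\hat\psi(2\eps\xi)-\hat\psi(\eps\xi)|\ls\eps^{1/2}$ together with $\int|\hat\mu|^2\hat\psi(\eps\xi)\,d\xi\ls\eps^{s-d}$, while the high-frequency part uses $I_\la(\xi)\ls\eps^{1/2}/(r_0\la)$, both giving $\ls r_0^{-1}\la^{-1}\eps^{1/2+s-d}$. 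Combining with the $\eps^{2n(s-d)}$ prefactor and taking square roots yields \eqref{5.2}, with the implicit constant absorbing the $r_0^{-1/2}$ factor.

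\textbf{Main obstacle.} The essential new point compared to Lemma \ref{L3.2} is the correct use of \eqref{4.1} to isolate one vertex while keeping the residual measure $\om_{\F_n}$ rotation-invariant. Both are built into Section 4: the partition $\bx=(x_j,\bx')$ is admissible, and $\om_{\F_n}$ inherits $SO(d)$-invariance from the distance equations. The hypothesis $k<d$ is used precisely here, since it guarantees $\dim S_{X_j}\geq 1$ and therefore the $(1+\cdot)^{-1/2}$ decay of $|\hat\nu_{X_j}|$ required to close the estimate.
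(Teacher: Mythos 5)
Your proposal is correct and follows essentially the same path as the paper's own argument: the same telescoping of $\prod\mu_{2\eps}-\prod\mu_\eps$, the same use of \eqref{4.1} to isolate one vertex and integrate against $\nu_{X_j}$, the same $L^\infty$/Cauchy--Schwarz/Plancherel reduction giving the $\eps^{2n(s-d)}$ prefactor and the integral $\int|\widehat{\De\mu_\eps}|^2 I_\la$, the same rotation-averaging of $\om_{\F_j}$ to obtain $I_\la(\xi)\ls(1+r_0\la|\xi|)^{-1}$, and the same $\eps^{-1/2}$ frequency split. The minor differences (writing $\phi(\bx')^2$ rather than $\phi(\bx')$, explicitly flagging where $k<d$ enters) do not change the structure or the exponent.
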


This implies again that in dimensions $d-\frac{1}{4n+2}\leq s\leq d$, there exists the limit $\,T_{\la\Ga_0}(\mu):=\lim_{\eps\to 0} T_{\la\Ga_0} (\mu_\eps)$. Also, the lower bound \eqref{3.6} holds for distance graphs of degree $k$, as it was shown for a large class of graphs, the so-called $k$-degenerate distance graphs, see \cite{LM18graph}. Thus one may argue exactly as in Section 3, to prove that there exists a $\la>0$ for which
\eq\label{5.3}
T_{\la\Ga_0}(\mu) >0,\ee
and Theorem 2 follows from the compactness of the configuration space $S_{\la\Ga_0}\subs \R^{dn}$. It remains to prove Lemma \ref{L4.2}.\\

\emph{Proof of Lemma \ref{L4.2}.} Write $\De T(\mu_\eps):=T_{\la \Ga_0}(\mu_\eps)-T_{\la \Ga_0}(\mu_{2\eps})$. Then we have $\De T(\mu_\eps)=\sum_{j=1}\De_j T(\mu_\eps)$, where $\De_j T(\mu_\eps)$ is given by \eqref{5.1} with $\mu_\eps(x-\la x_j)$ replaced by $\De\mu_\eps(x-\la x_j)$ given in \eqref{3.6}, and $\mu_\eps(x-\la x_i)$ by $\mu_{2\eps}(x-\la x_j)$ for $i>j$. Then by \eqref{4.1} we have the analogue of estimate \eqref{3.7}
\eq\label{5.2}|\De T (\mu_\eps)| \ls\,\eps^{(n-1)(s-d)}\,
\int \left|\int \mu_\eps(x)\ \De\mu_\eps\ast_\la \nu_{X_j}(x)\,dx\right|\,\phi(\bx')\,d\om_{\F_j}(\bx'),
\ee
where $\phi(\bx')=\prod_{i\neq j}\phi(x_j)$. Thus by Cauchy-Schwarz and Plancherel, 
\[|\De_j T^\eps (\mu)|^2 \ls\,\eps^{2n(s-d)}\,\int |\widehat{\De_\eps\mu}(\xi)|^2\,I_\la^j (\xi)\,d\xi,\]
where
\[I_\la^j (\xi) = \int |\hat{\nu}_{X_j}(\la\xi)|^2\,\phi(\bx')\,d\om_{\F_j}(\bx').
\]
Recall that on the support of $\phi(\bx')$ $S_{X_j}$ is a sphere of dimension at least 1 and of radius $r\geq r_0/2>0$, contained in an affine subspace orthogonal to $Span\,X_j$.  Thus,
\[|\hat{\nu}_{X_j}(\la\xi)|^2 \ls (1+r_0\la\,dist (\xi,Span\,X_j))^{-1}.\]

Let $U:\R^d\to\R^d$ be a rotation and for $\bx'=(x_i)_{i\neq j}$ write $U\bx'=(Ux_i)_{i\neq j}$. As explained in Section 6, the measure $\om_{\F_j}$ is invariant under the transformation $\bx'\to U\bx'$, hence

\begin{align*}
I_\la (\xi) &\ls \int \int (1+r_0\la\ dist (\xi,Span\,UX_j))^{-1}\,d\om_{\F_j} (\bx')\,dU\\
&= \int \int (1+r_0\la\,|\xi|\ dist(\eta,Span\,X_j))^{-1}\,d\si_{d-1}(\eta)\,d\om{\F_j} (\bx')\\
&\ls (1+r_0\, \la\, |\xi|)^{-1},
\end{align*}
where we have written again $\eta := |\xi|^{-1} U\xi\in S^{d-1}$.


Then we argue as in Lemma 2, noting that  $\widehat{\De\mu_\eps}(\xi)$ is essentially supported on $|\xi|\approx \eps^{-1}$ we have that 
\[|\De T (\mu_\eps)|^2 \ls\,r_0^{-1}\la^{-1}\eps^{2n(s-d)+\frac{1}{2}}\,\int |\hat{\mu}(\xi)|^2 \hat{\phi}(\eps\xi)\,d\xi\ls\,r_0^{-1}\la^{-1}\eps^{(2n+1)(s-d)+\frac{1}{2}},\]  
with $\tilde{\mu}_\eps=\mu_\eps$ or $\tilde{\mu}_\eps =\mu\ast \phi_\eps$. This proves Lemma \ref{L4.2}. $\Box$

\bigskip

\section{Measures on real algebraic sets.}

Let $\F=\{f_1,\ldots,f_n\}$ be a family of polynomials $f_i:\R^d\to\R$. We will describe certain measures supported on the algebraic set 
\eq\label{A1} S_{\F}:= \{x\in\R^d:\ f_1(x)=\ldots=f_n(x)=0\}.\ee

A point $x\in S_\F$ is called \emph{non-singular} if the gradient vectors 
$$ \nabla f_1(x),\ldots,\nabla f_n(x)\ $$ are linearly independent, and let $S_\F^0$ denote the set of non-singular points. It is well-known  and is easy to see, that if $S_\F^0\neq\emptyset$ then it is a relative open, dense subset of $S_\F$, and moreover it is an $d-n$-dimensional sub-manifold of $\R^d$. If $x\in S_\F^0$ then there exists a set of coordinates, $J=\{j_1,\ldots,j_n\}$, with $1\leq j_1<\ldots<j_n\leq d$, such that 

\eq\label{A2} j_{\F,J}(x):= det\, \left( \frac{\partial f_i}{\partial x_j} (x)\right)_{1\leq i\leq n,j\in J}\ \neq 0.\ee

Accordingly, we will call a set of coordinates $J$ \emph{admissible}, if \eqref{A2} holds for at least one point $x\in S_\F^0$, and will denote by $S_{\F,J}$ the set of such points. For a given set of coordinates $x_J$ let 
$\ \nabla_{x_J}f(x):=(\partial_{x_j}f(x))_{j\in J}\ $ and note that $J$ is admissible if and only if the gradient vectors 
$$ \nabla_{x_J}f_1(x),\ldots,\nabla_{x_J}f_n(x)\ $$ are linearly independent at at least one point $x\in S_\F$. It is clear that, unless $S_{\F,J}=\emptyset$, it is a relative open and dense subset of $S_\F$ and is a also $d-n$-dimensional sub-manifold, moreover $S_\F^0$ is the union of the sets $S_{\F,J}$ for all admissible $J$.

We define a measure, near a point $x_0\in S_{\F,J}$ as follows. For simplicity of notation assume that $J=\{1,\ldots,n\}$ and let 
$$\Phi(x):=(f_1,\ldots,f_n,x_{n+1},\ldots,x_d).$$ Then $\Phi:U\to V$ is a diffeomorphism on some open set $x_0\in U\subs \R^d$ to its image $V=\Phi(U)$, moreover $S_\F=\Phi^{-1}(V\cap \R^{d-n})$. Indeed, $x\in S_\F\cap U$ if and only if $\Phi(x)=(0,\dots,0,x_{n+1},\ldots, x_d)\in V$. Let $I=\{n+1,\ldots,d\}$ and write $x_I:=(x_{n+1},\ldots,x_d)$. Let $\Psi(x_I)=\Phi^{-1}(0,x_I)$ and in local coordinates $x_I$ define the measure $\omega_{\F}$ via

\eq\label{A3}\int g\,d\omega_{\F} := \int g(\Psi(x_I))\ Jac^{-1}_\Phi (\Psi(x_I))\,dx_I,
\ee
for a continuous function $g$ supported on $U$. Note that $Jac_\Phi(x)=j_{\F,J}(x)$, i.e. the Jacobian of the mapping $\Phi$ at $x\in U$ is equal to the expression given in \eqref{A2}, and that the measure $d\omega_\F$ is supported on $S_\F$. Define the local coordinates  $y_j=f_j(x)$ for $1\leq j\leq n$ and $y_j=x_j$ for $n<j\leq d$. Then
\[dy_1\wedge\ldots\wedge dy_d = df_1\wedge\ldots\wedge df_n\wedge dx_{n+1}\wedge\ldots \wedge dx_d = Jac_\Phi(x)\,dx_1\wedge\ldots\wedge dx_d,\]
thus
\[dx_1\wedge\ldots\wedge dx_d = Jac_\Phi(x)^{-1} df_1\wedge\ldots\wedge df_n \wedge dx_{n+1}\wedge\ldots \wedge dx_d = df_1\wedge\ldots\wedge df_n \wedge d\omega_\F.\]

This shows that the measure $d\omega_\F$ (given as a differential $d-n$-form on $S_{\F}\cap U$) is independent of the choice of local coordinates $x_I$. Then $\omega_\F$ is defined on $S_\F^0$ and moreover the set $S_\F^0\backslash S_{\F,J}$ is of measure zero with respect to $\omega_F$, as it is a proper analytic subset on $\R^{d-n}$ in any other admissible local coordinates.\\


Let $x=(z,y)$ be a partition of coordinates in $\R^d$, with $y=x_{J_2}$, $z=X_{J_1}$, and assume that for $i=1,\ldots,m$ the functions $f_i$ depend only on the $z$-variables. We say that the partition of coordinates is \emph{admissible}, if there is a point $x=(z,y)\in S_\F$ such that both the gradient vectors $\nabla_z f_1(x),\ldots,\nabla_z f_m(x)$ and the vectors $\nabla_y f_{m+1}(x),\ldots,\nabla_y f_n(x)$ for a linearly independent system. Partition the system $\F=\F_1\cup\F_2$ with $\F_1=\{f_1,\ldots,f_m\}$ and $\F_2=\{f_{m+1},\ldots,f_n\}$. Then there is set $J_1'\subs J_1$ for which 
\[j_{\F_1,J_1'}(z):= \det \left(\frac{\partial f_i}{\partial x_j}(z)\right )_{1\leq i\leq m, \,j\in J_1'}\neq 0,\]
and also a set $J_2'\subs J_2$ such that 
\[j_{\F_2,J_2'}(z,y):= \det \left(\frac{\partial f_i}{\partial x_j}(z,y)\right)_{m+1\leq i\leq n,\, j\in J_2'}\neq 0.\]
Since $\nabla_y f_i\equiv 0$ for $1\leq i\leq m$, it follows that the set of coordinates $J'=J_1'\cup J_2'$ is admissible, moreover
\[j_{\F,J'}(y,z) = j_{\F_1,J_1'}(z)\,j_{\F_2,J_2'}(y,z).\]

For fixed $z$, let $f_{i,z}(y):=f_i(z,y)$ and let $\F_{2,z}=\{f_{m+1,z},\ldots,f_{n,z}\}$. Then clearly $j_{\F_2,J_2'}(y,z)=j_{\F_{2,z},J_2'}(y)$ as it only involves partial derivatives with respect to the $y$-variables. Thus we have an analogue of Fubini's theorem, namely

\eq\label{A7}
\int g(x)\,d\omega_\F (x) \,=\, \int\int g(z,y)\,d\omega_{\F_{2,z}}(y)\, d\omega_{\F_1}(z).\ee

\bigskip

Consider now algebraic sets given as the intersection of spheres. Let $x_1,\ldots,x_m\in\R^d$, $t_1,\ldots,t_m>0$ and $\F=\{f_1,\ldots,f_m\}$ where $f_i(x)=|x-x_i|^2-t_i$ for $i=1,\ldots,m$. Then $S_\F$ is the intersection of spheres centered at the points $x_i$ of radius $r_i=t_i^{1/2}$. If the set of points $X=\{x_1,\ldots,x_m\}$ is in general position (i.e they span an $m-1$-dimensional affine subspace), then a point $x\in S_\F$ is non-singular if
$x\notin span\,X$, i.e if $x$ cannot be written as linear combination of $x_1,\ldots,x_m$. Indeed, since $\nabla f_i(x)=2(x-x_i)$ we have that
\[\sum_{i=1}^m a_i\nabla f_i(x)=0\ \Longleftrightarrow\ \sum_{i=1}^m a_i\,x = \sum_{i=1}^m a_i x_i,\]
which implies $\sum_{i=1}^m a_i=0$ and $\sum_{i=1}^m a_i x_i=0$. By replacing the equations $|x-x_i|^2=t_i$ with $|x-x_1|^2-|x-x_i|^2=t_1-t_i$, which is of the form $x\cdot (x_1-x_i)=c_i$, for $i=2,\ldots,m$, it follows that $S_\F$ is the intersection of sphere with an $n-1$-codimensional affine subspace $Y$, perpendicular to the affine subspace spanned by the points $x_i$. Thus $S_\F$ is an $m$-codimensional sphere of $\R^d$ if $S_\F$ has one point $x\notin span \{x_1,\ldots,x_m\}$ and all of its points are non-singular. Let $x'$ be the orthogonal projection of $x$ to $span X$. If $y\in Y$ is a point with $|y-x'|=|x-x'|$ then by the Pythagorean theorem we have that $|y-x_i|=|x-x_i|$ and hence $y\in S_\F$. It follows that $S_\F$ is a sphere centered at $x'$ and contained in $Y$.\\

Let $T=T_X$ be the inner product matrix with entries $t_{ij}:=(x-x_i)\cdot (x-x_j)$ for $x\in S_\F$. Since $(x-x_i)\cdot (x-x_j)=1/2( t_i+t_j - |x_i-x_j|^2)$ the matrix $T$ is independent of $x$. We will show that $d\om_\F=c_T\, d\si_{S_\F}$ where $d\si_{S_\F}$ denotes the surface area measure on the sphere $S_\F$ and $c_T= 2^{-m} det (T)^{-1/2}>0$, i.e for a function $g\in C_0(\R^d)$,
\eq\label{A8}
\int_{S_\F} g(x)\,d\om_\F(x) = c_T\int_{S_\F} g(x)\,d\si_{S_\F}(x).
\ee

Let  $x\in S_\F$ be fixed and let $e_1,\ldots,e_d$ be an orthonormal basis so that the tangent space $\ T_x S_\F= Span \{e_{m+1},\ldots,e_d\}$ and moreover we have that\\  $\ Span \{\nabla f_1,\ldots,\nabla f_m\}= Span\{e_1,\ldots,e_m\}\,$. Let $x_1,\ldots,x_n$ be the corresponding coordinates on $\R^d$ and note that in these coordinates the surface area measure, as a $d-m$-form at $x$, is 
\[d\si_{S_\F}(x)=d x_{m+1}\wedge\ldots\wedge d x_d.\]
On the other hand, in local coordinates $x_I=(x_{m+1},\ldots,x_d)$, it is easy to see form \eqref{A2}-\eqref{A3} that $j_{\F,J}(x)= 2^m\,vol (x-x_1,\ldots,x-x_m)$ and hence
\[d\om_\F(x) = 2^{-m} vol (x-x_1,\ldots,x-x_m)^{-1}\,d x_{m+1}\wedge\ldots\wedge d x_d,\]
where $vol (x-x_1,\ldots,x-x_m)$ is the volume of the parallelotope with side vectors $x-x_j$. Finally, it is a well-known fact from linear algebra that 
\[vol (x-x_1,\ldots,x-x_m)^2 = det\, (T),\] 
i.e. the volume of a parallelotope is the square root of the Gram matrix formed by the inner products of its side vectors.

\vspace{0.5in}


\end{document}